\theoremstyle{plain}
\newtheorem{theorem}{Theorem}[section]
\newtheorem{lemma}[theorem]{Lemma}
\newtheorem{corollary}[theorem]{Corollary}
\theoremstyle{definition}
\theoremstyle{remark}
\title{\bf Characterizing $\boldsymbol{2}$-Distance Graphs \\ and Solving the Equations \\ $\boldsymbol{T_2(X)=kP_2}$ or $\boldsymbol{K_m \cup K_n}$}
\author{{\bf Ramuel P. Ching}\thanks{The author is supported by the Accelerated Science and Technology Human Resource Development Program of the Department of Science and Technology - Science Education Institute (Philippines).} \\ {\bf I.J.L. Garces}\thanks{Corresponding author: ijlgarces@ateneo.edu}}
\affil{Department of Mathematics \\ Ateneo de Manila University \\ Loyola Heights, Quezon City \\ The Philippines}
\date{}
\begin{document}

\maketitle

\begin{abstract}
Let $X$ be a finite, simple graph with vertex set $V(X)$. The \mbox{$2$-distance} graph $T_2(X)$ of $X$ is the graph with the same vertex set as $X$ and two vertices are adjacent if and only if their distance in $X$ is exactly $2$. A graph $G$ is a $2$-distance graph if there exists a graph $X$ such that $T_2(X)=G$. In this paper, we give three characterizations of $2$-distance graphs, and find all graphs $X$ such that $T_2(X)=kP_2$ or $K_m \cup K_n$, where $k \ge 2$ is an integer, $P_2$ is the path of order $2$, and $K_m$ is the complete graph of order $m \ge 1$.

\medskip\noindent
2010 Mathematics Subject Classification: Primary 05C12

\noindent
Keywords: Distance in graphs, $2$-distance graph, complement of a graph, cocktail party graph, bipartite graph, complete graph
\end{abstract}


\section{Introduction}
By a graph, we mean a finite, simple, but not necessarily connected graph. Let $X$ be a graph with vertex set $V(X)$ and edge set $E(X)$. For each $u \in V(X)$, let $N_X(u)$ be the neighborhood of $u$ in $X$ (that is, the set of all vertices of $X$ that are adjacent to $u$), and the cardinality of $N_X(u)$ is the \textit{degree} of $u$ in $X$, which is denoted by $\text{deg}_X(u)$.

The \textit{distance} between two vertices $u$ and $v$ in $X$, denoted by $\text{dist}_X(u,v)$, is the length of a shortest $u$-$v$ path in $X$. For convenience, we set $\text{dist}_X(u,v)=0$ if and only if $u=v$, and  $\text{dist}_X(u,v)=\infty$ if and only if $u$ and $v$ are in different components of $X$. The number $\text{diam}(X)=\max\{\text{dist}_X(u,v)\,:\,u,v\in V(X)\}$ is the \textit{diameter} of $X$.

The \textit{$2$-distance graph} of $X$, denoted by $T_2(X)$, is the graph with $V(T_2(X))=V(X)$ in which two vertices $u$ and $v$ are adjacent in $T_2(X)$ if and only if $\text{dist}_X(u,v)=2$. A graph $G$ is a \textit{$2$-distance graph} if there exists a graph $X$ such that $T_2(X)=G$, where equality refers to graph isomorphism. Furthermore, we say that a graph $X_0$ is a \textit{solution} to the equation $T_2(X)=G$ if $T_2(X_0)=G$.

The idea of $2$-distance graph is a particular case of a general notion of $k$-distance graph $T_k(X)$, which was first studied by Harary, Hoede, and Kedlacek \cite{harary}. They investigated the connectedness of $2$-distance graph. In the book by Prisner \cite[pp157-159]{prisner}, the dynamics of $k$-distance operator was explored. Furthermore, Boland, Haynes, and Lawson \cite{boland} extended the $k$-distance operator to a graph invariant they call distance-$n$ domination number. Recently, Azimi and Farrokhi \cite{azimi} studied all graphs whose $2$-distance graphs have maximum degree $2$. They also solved the problem of finding all graphs whose $2$-distance graphs are paths or cycles.

In this paper, we give three characterizations of $2$-distance graphs, and find all graphs $X$ such that $T_2(X)=kP_2$ or $K_m \cup K_n$, where $k \ge 2$ is an integer, $P_2$ is the path of order $2$, and $K_m$ is the complete graph of order $m \ge 1$. We end the paper with some open problems.

Given a graph $X$, we denote its complement and its maximum degree by $X^c$ and $\Delta(G)$, respectively. Other graph-theoretic terms and notations that are not explicitly defined here can be found in \cite{Bondy&Murty}.

\section{Characterizations of $\boldsymbol{2}$-distance graphs}
We now present the first of the three characterizations of $2$-distance graphs.

\begin{theorem}\label{theorem:1stcharacterization}
Let $G$ be a graph. The following properties are equivalent:
\begin{enumerate}
\item[{\rm (i)}] $G$ is a $2$-distance graph;
\item[{\rm (ii)}] for every $v_1v_2\in E(G)$, there exists $v_3\in V(G)$ that is not adjacent to both $v_1$ and $v_2$ in $G$; and
\item[{\rm (iii)}] ${\rm diam}(G^c)\leq 2$.
\end{enumerate}
\end{theorem}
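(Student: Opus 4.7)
The plan is to establish the cycle of implications (i) $\Rightarrow$ (ii) $\Rightarrow$ (iii) $\Rightarrow$ (i), exploiting the elementary translation between adjacency in $G$ and $G^c$.

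For (i) $\Rightarrow$ (ii), I would suppose $G = T_2(X)$ for some graph $X$ and pick an arbitrary edge $v_1v_2 \in E(G)$. By definition of $T_2(X)$, this edge corresponds to $\mathrm{dist}_X(v_1,v_2)=2$, so $v_1$ and $v_2$ must admit a common neighbor $v_3$ in $X$. Since $v_3$ is adjacent to each of $v_1,v_2$ in $X$, we have $\mathrm{dist}_X(v_1,v_3) = \mathrm{dist}_X(v_2,v_3) = 1 \ne 2$, so $v_3$ is non-adjacent to both $v_1$ and $v_2$ in $G = T_2(X)$. This gives (ii).

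For (ii) $\Rightarrow$ (iii), I would simply unwind the definition of $\mathrm{diam}(G^c)$. Any two distinct vertices of $G^c$ that are adjacent in $G^c$ are already at distance $1$, so the only case to check is a pair $v_1,v_2$ of distinct vertices that are non-adjacent in $G^c$, i.e., adjacent in $G$. By (ii) applied to this edge, there is a $v_3$ non-adjacent to both $v_1$ and $v_2$ in $G$, hence adjacent to both in $G^c$, giving a $v_1$--$v_2$ path of length $2$ in $G^c$. Therefore $\mathrm{diam}(G^c) \le 2$.

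The heart of the argument is (iii) $\Rightarrow$ (i), and here the key idea (and the only nontrivial step, though not really an obstacle once spotted) is to try $X := G^c$. I would verify that $T_2(G^c) = G$ by comparing edge sets. If $v_1v_2 \in E(G)$, then $v_1v_2 \notin E(G^c)$; since $\mathrm{diam}(G^c) \le 2$ by (iii), there must be a common neighbor of $v_1,v_2$ in $G^c$, so $\mathrm{dist}_{G^c}(v_1,v_2) = 2$ and $v_1v_2$ is an edge of $T_2(G^c)$. Conversely, if $v_1v_2 \notin E(G)$, then $v_1v_2 \in E(G^c)$, giving $\mathrm{dist}_{G^c}(v_1,v_2)=1 \ne 2$, so $v_1v_2 \notin E(T_2(G^c))$. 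Hence $T_2(G^c) = G$ and $G$ is a $2$-distance graph, closing the cycle. The main thing to be careful about is that the convention $\mathrm{dist} = \infty$ for vertices in different components does not bite us: the hypothesis $\mathrm{diam}(G^c) \le 2$ already forces $G^c$ to be connected, so all distances in $G^c$ are finite.
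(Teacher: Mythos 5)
Your proposal is correct and follows essentially the same route as the paper: the same cycle of implications, with the key step being the choice $X=G^c$ in (iii) $\Rightarrow$ (i). The only cosmetic difference is that you verify $T_2(G^c)=G$ by a uniform edge-set comparison, whereas the paper splits into the cases $\mathrm{diam}(G^c)\le 1$ and $\mathrm{diam}(G^c)=2$; both are fine.
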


\begin{proof}
(i) $\Rightarrow$ (ii): Suppose $G=T_2(X)$ for some graph $X$. Let $v_1$ and $v_2$ be adjacent vertices in $G$. Then $\text{dist}_X(v_1,v_2)=2$. Thus, there exists a vertex $v_3$ adjacent to both $v_1$ and $v_2$ in $X$. This implies that $v_3$ is not adjacent to both $v_1$ and $v_2$ in $G$.

(ii) $\Rightarrow$ (iii): We consider two cases.

{\sc Case 1.} Suppose $G^c$ is a complete graph. Then $\text{diam}(G^c)=0$ or $1$.

{\sc Case 2.} Suppose there exist two vertices in $G^c$ that are not adjacent, say $v_1$ and $v_2$. It implies that $v_1$ and $v_2$ are adjacent in $G$, and, by assumption, there exists a vertex $v_3$ not adjacent to both of them in $G$. It follows that $v_1$ and $v_2$ are both adjacent to $v_3$ in $G^c$, and so $\text{dist}_{G^c}(v_1,v_2)=2$. Therefore, we have $\text{diam}(G^c) \le 2$.

(iii) $\Rightarrow$ (i): If $\text{diam}(G^c)=0$ or $1$, then $G^c$ is a complete graph, and so $G$ is the empty graph. This implies that $T_2(G^c)=G$. On the other hand, if $\text{diam}(G^c)=2$, then the vertices that are adjacent in $T_2(G^c)$ are exactly those vertices that are not adjacent in $G^c$. Thus, we get $T_2(G^c)=(G^c)^c=G$.
\end{proof}

The following corollaries are quick consequences of the preceding theorem.

\begin{corollary}
Every disconnected graph is a $2$-distance graph.
\end{corollary}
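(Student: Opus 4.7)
The plan is to invoke Theorem \ref{theorem:1stcharacterization} and verify the easiest of the three equivalent conditions directly from disconnectedness. I would use condition (ii), since it translates the disconnectedness hypothesis immediately into the required statement.

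First, suppose $G$ is a disconnected graph, and fix two components $C$ and $C'$ of $G$ with $C \neq C'$. Let $v_1v_2$ be any edge of $G$. Since an edge lies entirely within one component, $\{v_1,v_2\}$ is contained in a single component, so at least one of $C, C'$ — call it $C^*$ — does not contain $v_1$ or $v_2$. Choosing any $v_3 \in V(C^*)$, we have that $v_3$ lies in a different component from $v_1$ and from $v_2$, so in particular $v_3$ is not adjacent in $G$ to either of them. This verifies condition (ii) of Theorem \ref{theorem:1stcharacterization}, and therefore $G$ is a $2$-distance graph.

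Alternatively, one could verify condition (iii) just as quickly: if $u,v$ lie in different components of $G$, then $uv \in E(G^c)$; if $u,v$ lie in the same component, any vertex $w$ from a different component satisfies $uw, vw \in E(G^c)$, yielding a $u$-$v$ path of length $2$ in $G^c$. Either route finishes the argument.

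There is no real obstacle here; the only thing to be careful about is the degenerate case in which $G$ has an isolated vertex but no edges. In that situation condition (ii) is vacuously satisfied (there are no edges $v_1v_2$), and $G^c$ is complete, so $\text{diam}(G^c) \le 1$; the corollary still holds.
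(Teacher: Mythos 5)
Your proof is correct and follows exactly the route the paper intends: the corollary is stated as a quick consequence of Theorem \ref{theorem:1stcharacterization}, and verifying condition (ii) (or (iii)) by picking a witness vertex from a different component is precisely that argument. Your handling of the edgeless case is a nice extra precaution, but nothing more is needed.
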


\begin{corollary}\label{corollary:complement}
Let $G$ be a $2$-distance graph. Then $G^c$ is a solution to the equation $T_2(X)=G$. Moreover, if $X_0$ is a solution to $T_2(X)=G$, then $E(X_0) \subseteq E(G^c)$.
\end{corollary}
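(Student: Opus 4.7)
The plan is to handle the two assertions separately, each leaning directly on the theorem just proved.

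For the first assertion, that $G^c$ solves $T_2(X)=G$, I would invoke part (iii) of Theorem~\ref{theorem:1stcharacterization} to conclude $\text{diam}(G^c)\le 2$, and then reuse the argument from the (iii)$\Rightarrow$(i) direction verbatim. Specifically, if $\text{diam}(G^c)\le 1$ then $G^c$ is complete, $G$ is the empty graph, and $T_2(G^c)$ is also empty, so $T_2(G^c)=G$; if $\text{diam}(G^c)=2$, then a pair of vertices has distance exactly $2$ in $G^c$ precisely when they are non-adjacent in $G^c$, i.e.\ adjacent in $G$, giving $T_2(G^c)=(G^c)^c=G$.

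For the second assertion, I would argue edge-by-edge from the definition. Suppose $X_0$ is a solution, i.e.\ $T_2(X_0)=G$, and let $uv\in E(X_0)$. Then $\text{dist}_{X_0}(u,v)=1\ne 2$, so $u$ and $v$ are not adjacent in $T_2(X_0)=G$; that is, $uv\notin E(G)$, which is equivalent to $uv\in E(G^c)$. Therefore $E(X_0)\subseteq E(G^c)$.

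There is no real obstacle: both parts reduce to definitions plus the content already packaged in Theorem~\ref{theorem:1stcharacterization}. The only matter of care is phrasing the first part so that it is clear we are recording, as a standalone fact, what was proved in passing during (iii)$\Rightarrow$(i); this justifies presenting it as a corollary rather than as a consequence needing fresh work.
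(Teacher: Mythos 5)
Your proof is correct and follows exactly the route the paper intends: the paper leaves this corollary as a ``quick consequence'' of Theorem~\ref{theorem:1stcharacterization}, and your argument simply records the (iii)$\Rightarrow$(i) computation $T_2(G^c)=(G^c)^c=G$ for the first claim and the definitional observation that an edge of $X_0$ gives distance $1$, hence a non-edge of $T_2(X_0)=G$, for the second.
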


Our second characterization of $2$-distance graphs utilizes a result by Bloom, Kennedy, and Quintas \cite{bloom} that characterizes graphs with diameter $2$.

Let $u$ be a vertex of a graph $X$. A \textit{star centered at $u$} is a subgraph of $X$ consisting of edges that have $u$ as a common vertex. Let $uv$ be an edge of $X$. A \textit{double-star on $uv$}, or simply a \textit{double-star}, is a maximal tree in $X$ that is the union of stars centered at $u$ or $v$ such that both stars contain $uv$.

Let $Y$ be a subgraph of a graph $X$. We say that $Y$ \textit{spans} $X$ if $V(Y)=V(X)$.

\begin{lemma} {\rm \cite[Theorem 1]{bloom}}\label{theorem:bloom}
A graph $G$ has diameter two if and only if $G^c$ is not empty and $G^c$ is not spanned by a double-star.
\end{lemma}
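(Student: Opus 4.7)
The plan is to translate both clauses about $G^c$ into equivalent statements about $G$ and then recognize their conjunction as a restatement of ``$\text{diam}(G) = 2$.''

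First, I would unpack the spanning double-star condition. Given an edge $uv$ of $G^c$, I claim every maximal double-star on $uv$ has vertex set $\{u,v\} \cup N_{G^c}(u) \cup N_{G^c}(v)$: for each $w \in N_{G^c}(u) \cap N_{G^c}(v)$ with $w \neq u,v$ one must retain exactly one of the edges $uw$ or $vw$ to avoid the triangle $uwv$, and for each neighbor of only $u$ or only $v$ the unique joining edge is included by maximality. Hence some double-star on $uv$ spans $G^c$ if and only if every vertex $w \neq u,v$ lies in $N_{G^c}(u) \cup N_{G^c}(v)$. Passing to the complement of $G^c$, this says there exist non-adjacent vertices $u,v$ of $G$ with no common neighbor in $G$. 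Combined with the trivial equivalence ``$G^c$ has an edge iff $G$ is not complete,'' the right-hand side of the lemma becomes: \emph{$G$ is not complete, and every pair of non-adjacent vertices in $G$ has a common neighbor in $G$}.

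Next, I would verify this rewording matches $\text{diam}(G) = 2$. If $\text{diam}(G) = 2$, then $G$ is not complete (some pair has distance $2$) and every non-adjacent pair has distance exactly $2$, hence a common neighbor. Conversely, if $G$ is not complete and every non-adjacent pair has a common neighbor, then every pair of distinct vertices is at distance $1$ or $2$ with at least one pair at distance $2$, so $\text{diam}(G) = 2$.

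The hard part will be the first step: one must correctly handle common neighbors of $u$ and $v$ in $G^c$, which would otherwise create triangles and prevent the raw union of stars from being a tree, and then confirm that maximality still forces the double-star to cover every vertex of $N_{G^c}(u) \cup N_{G^c}(v)$. Once this translation is in place, matching it with $\text{diam}(G) = 2$ is a direct consequence of the definition of diameter.
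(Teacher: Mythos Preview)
The paper does not prove this lemma; it is quoted as Theorem~1 of Bloom, Kennedy, and Quintas and used as a black box, so there is no in-paper argument to compare against. Your proposal is correct: once you identify the vertex set of any maximal double-star on $uv$ in $G^c$ as $\{u,v\}\cup N_{G^c}(u)\cup N_{G^c}(v)$ (handling common neighbors by keeping exactly one of the two edges, as you do), the spanning condition translates in $G$ to ``some non-adjacent pair has no common neighbor,'' and the equivalence with $\mathrm{diam}(G)=2$ is then the definition of diameter.
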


\begin{theorem}
A graph $G$ is a $2$-distance graph if and only if $G$ is not spanned by a double-star.
\end{theorem}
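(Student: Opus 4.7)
The plan is to reduce the statement, via the equivalence (i) $\Leftrightarrow$ (iii) of Theorem~\ref{theorem:1stcharacterization}, to the purely diameter-theoretic claim that $\operatorname{diam}(G^c)\leq 2$ holds if and only if $G$ is not spanned by a double-star. Once this reformulation is made, the work is essentially done by applying the Bloom--Kennedy--Quintas lemma (Lemma~\ref{theorem:bloom}) to the graph $G^c$ rather than to $G$.

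More precisely, I would first invoke the equivalence of (i) and (iii) from Theorem~\ref{theorem:1stcharacterization} so that proving ``$G$ is a $2$-distance graph $\Leftrightarrow$ $G$ is not spanned by a double-star'' amounts to showing ``$\operatorname{diam}(G^c)\leq 2$ $\Leftrightarrow$ $G$ is not spanned by a double-star.'' Next, I would apply Lemma~\ref{theorem:bloom} with the graph role played by $G^c$: its complement is $G$, so the lemma reads as \emph{$G^c$ has diameter two if and only if $G$ is not empty and $G$ is not spanned by a double-star}. This is the key identification that moves the double-star condition from $G^c$ (as it appears in Bloom's theorem) to $G$ (as it appears in our statement).

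After that I would split into two short cases according to whether $\operatorname{diam}(G^c)$ equals $2$ or is strictly less than $2$. If $\operatorname{diam}(G^c)\in\{0,1\}$, then $G^c$ is complete and $G$ is edgeless; since a double-star by definition contains an edge, an edgeless $G$ vacuously fails to be spanned by any double-star, so the equivalence holds trivially in this case. If $\operatorname{diam}(G^c)=2$, the reformulated Lemma~\ref{theorem:bloom} gives precisely the equivalence with ``$G$ has an edge and is not spanned by a double-star,'' which, combined with the previous case, yields the clean biconditional ``$\operatorname{diam}(G^c)\leq 2$ $\Leftrightarrow$ $G$ is not spanned by a double-star.''

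The argument has no real technical obstacle: the only thing to watch out for is the bookkeeping around the degenerate case in which $G$ is edgeless (equivalently $G^c$ is complete), where Lemma~\ref{theorem:bloom} does not directly apply because its ``$G^c$ is not empty'' hypothesis fails. Being careful to apply Bloom's lemma to $G^c$ (not to $G$) and to handle the edgeless-$G$ case separately are the only subtleties; once these are addressed, the proof reduces to a one-line combination of Theorem~\ref{theorem:1stcharacterization}(iii) and Lemma~\ref{theorem:bloom}.
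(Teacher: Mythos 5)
Your proposal is correct and matches the paper's own argument: the paper likewise combines Theorem~\ref{theorem:1stcharacterization} (i)$\Leftrightarrow$(iii) with Lemma~\ref{theorem:bloom} applied to $G^c$ (so the double-star condition lands on $G$), treating the edgeless case, where $\operatorname{diam}(G^c)\le 1$, separately. The only cosmetic difference is that you phrase it as a chain of equivalences with a case split on $\operatorname{diam}(G^c)$, while the paper writes out the two implications.
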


\begin{proof}
Assume that $G$ is a $2$-distance graph.  By Theorem \ref{theorem:1stcharacterization}, it follows that $\text{diam}(G^c)\leq 2$. If diam$(G^c)=0$ or $1$, then $G$ is empty, and so $G$ is not spanned by a double-star. If diam$(G^c)=2$, then, by Lemma \ref{theorem:bloom}, $G$ is not spanned by a double-star.

Assume that $G$ is not spanned by a double-star. If $G$ is not empty, then, by Lemma \ref{theorem:bloom}, we have diam$(G^c)=2$, and so $G$ is a $2$-distance graph by Theorem \ref{theorem:1stcharacterization}. If $G$ is empty, then $G^c$ is a complete graph, and so $\text{diam}(G^c)=0$ or $1$, and, by Theorem \ref{theorem:1stcharacterization} again, $G$ is a $2$-distance graph.
\end{proof}

Finally, we give the third characterization of $2$-distance graphs, which emphasizes the degrees of the vertices.

We first mention the following lemma, whose proof is immediate.

\begin{lemma}\label{lemma:degree}
Let $G$ be a graph with at least one edge, and let $ab \in E(G)$. If $\text{deg}_G(a)+\text{deg}_G(b) < |V(G)|$, then $V(G)\setminus(N_G(a)\cup N_G(b)) \ne \emptyset$.
\end{lemma}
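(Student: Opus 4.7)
The plan is to give a short union-bound argument. The left-hand side of the desired conclusion is the complement in $V(G)$ of the set $N_G(a)\cup N_G(b)$, so it suffices to show that $|N_G(a)\cup N_G(b)|<|V(G)|$, i.e.\ that $N_G(a)\cup N_G(b)$ is a proper subset of $V(G)$.

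First I would invoke the elementary inequality
\[
|N_G(a)\cup N_G(b)|\le |N_G(a)|+|N_G(b)|=\deg_G(a)+\deg_G(b),
\]
which holds for any two sets. Combining this with the hypothesis $\deg_G(a)+\deg_G(b)<|V(G)|$ immediately gives $|N_G(a)\cup N_G(b)|<|V(G)|$, so there exists some vertex $c\in V(G)$ lying in neither $N_G(a)$ nor $N_G(b)$; such a $c$ is precisely a witness to $V(G)\setminus (N_G(a)\cup N_G(b))\ne\emptyset$.

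There is essentially no obstacle: the argument is a one-line counting step, and the assumption that $ab\in E(G)$ is not even needed for the stated implication (it only guarantees the hypothesis is consistent, since $|V(G)|\ge 2$). I would therefore present the proof in just two or three lines, without case analysis, matching the paper's remark that the proof is immediate.
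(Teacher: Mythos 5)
Your union-bound argument is correct and is exactly the ``immediate'' counting proof the paper has in mind (the paper states the lemma without proof): since $|N_G(a)\cup N_G(b)|\le \deg_G(a)+\deg_G(b)<|V(G)|$, some vertex lies outside $N_G(a)\cup N_G(b)$. Your side remark is also accurate—the hypothesis $ab\in E(G)$ is not needed for the statement itself, though it is what guarantees, in the paper's later application, that the witness vertex is distinct from both $a$ and $b$ (because $a\in N_G(b)$ and $b\in N_G(a)$).
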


\begin{theorem}\label{theorem:3rdcharacterization}
A graph $G$ is a $2$-distance graph if and only if, for each $v\in V(G)$ with $\text{deg}_G(v)\geq \frac{1}{2}|V(G)|$ and for each $a\in N_G(v)$, there exists $b\in V(G)$ such that $b\notin N_G(v)\cup N_G(a)$.
\end{theorem}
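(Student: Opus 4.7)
The plan is to leverage Theorem \ref{theorem:1stcharacterization}, using condition (ii) as the target: $G$ is a $2$-distance graph iff every edge $v_1v_2 \in E(G)$ has a witness vertex $v_3$ outside $N_G(v_1) \cup N_G(v_2)$. The new theorem asserts that it is enough to check this condition at edges having at least one endpoint of degree $\geq \tfrac{1}{2}|V(G)|$. So the proof becomes a matter of reducing the general ``every edge'' condition to the restricted ``high-degree endpoint'' condition, with Lemma \ref{lemma:degree} handling the remaining case for free.

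The forward direction is immediate: if $G$ is a $2$-distance graph, then Theorem \ref{theorem:1stcharacterization}(ii) gives, for \emph{every} edge $va \in E(G)$ (in particular whenever $\deg_G(v) \geq \tfrac{1}{2}|V(G)|$ and $a \in N_G(v)$), a vertex $b \in V(G)$ with $b \notin N_G(v) \cup N_G(a)$. So I would dispatch this in one or two sentences.

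For the backward direction, I would assume the hypothesis of the theorem and verify Theorem \ref{theorem:1stcharacterization}(ii). Pick an arbitrary edge $v_1 v_2 \in E(G)$ and split on the combined degree. If $\deg_G(v_1) + \deg_G(v_2) < |V(G)|$, Lemma \ref{lemma:degree} directly produces a vertex $v_3 \in V(G) \setminus (N_G(v_1) \cup N_G(v_2))$. Otherwise $\deg_G(v_1) + \deg_G(v_2) \geq |V(G)|$, so at least one endpoint, say $v_1$ (by relabeling), satisfies $\deg_G(v_1) \geq \tfrac{1}{2}|V(G)|$. Since $v_2 \in N_G(v_1)$, the hypothesis applied with $v := v_1$ and $a := v_2$ yields the desired $b \notin N_G(v_1) \cup N_G(v_2)$. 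In either case Theorem \ref{theorem:1stcharacterization}(ii) holds, so $G$ is a $2$-distance graph.

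There is no real obstacle here; the only thing to be careful about is the ``relabeling'' step in the high-degree case, namely that $\deg_G(v_1) + \deg_G(v_2) \geq |V(G)|$ forces $\max(\deg_G(v_1), \deg_G(v_2)) \geq \tfrac{1}{2}|V(G)|$, and that the hypothesis is symmetric enough to be applied after relabeling. One should also briefly check the trivial case when $G$ has no edges, where Theorem \ref{theorem:1stcharacterization}(ii) holds vacuously and $G$ is trivially a $2$-distance graph; this can be folded into the forward reading of the statement or dismissed in one line at the start of the proof.
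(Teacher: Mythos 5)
Your proposal is correct and follows essentially the same route as the paper: both directions reduce to Theorem \ref{theorem:1stcharacterization}, with the converse handled by the same case split (one endpoint of the edge has degree at least $\tfrac{1}{2}|V(G)|$ versus combined degree less than $|V(G)|$, the latter settled by Lemma \ref{lemma:degree}). The only cosmetic difference is that you verify condition (ii) of Theorem \ref{theorem:1stcharacterization} directly, while the paper phrases the same argument in terms of showing $\mathrm{diam}(G^c)=2$, i.e.\ condition (iii).
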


\begin{proof}
Without loss of generality, we assume that $G$ has at least one edge.

Suppose $G$ is a $2$-distance graph. Then the conclusion follows at once from Theorem \ref{theorem:1stcharacterization}.

Conversely, suppose that, for each $v\in V(G)$ with $\text{deg}_G(v)\geq \frac{1}{2}|V(G)|$ and for each $a\in N_G(v)$, there exists a vertex that is not adjacent to both $v$ and $a$. Let $x,y \in V(G)$. If $xy \not\in E(G)$, then $\text{dist}_{G^c}(x,y)=1$. Now, suppose $xy \in E(G)$. We consider two cases.

{\sc Case 1.} If $\text{deg}_G(x) \geq \frac{1}{2}|V(G)|$ or $\text{deg}_G(y) \geq \frac{1}{2}|V(G)|$, then $\text{dist}_{G^c}(x,y)=2$ by assumption.

{\sc Case 2.} If $\text{deg}_G(x) < \frac{1}{2}|V(G)|$ and $\text{deg}_G(y) < \frac{1}{2}|V(G)|$, then $\text{deg}_G(x)+\text{deg}_G(y) < |V(G)|$. By Lemma \ref{lemma:degree}, we have $\text{dist}_{G^c}(x,y)=2$.

Thus, we have $\text{diam}(G^c) = 2$, and, by Theorem \ref{theorem:1stcharacterization}, $G$ is a $2$-distance graph.
\end{proof}

Drawing ideas from the proof of the preceding theorem, the following corollaries follow immediately.

\begin{corollary}
Let $G$ be a graph with $\Delta(G) < \frac{1}{2}|V(G)|$. Then $G$ is a $2$-distance graph.
\end{corollary}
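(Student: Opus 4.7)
The plan is to invoke Theorem \ref{theorem:3rdcharacterization} and observe that, under the hypothesis $\Delta(G) < \frac{1}{2}|V(G)|$, the universally quantified condition in that theorem is vacuously satisfied. Indeed, every vertex $v \in V(G)$ obeys $\text{deg}_G(v) \le \Delta(G) < \frac{1}{2}|V(G)|$, so there is \emph{no} vertex $v$ with $\text{deg}_G(v) \ge \frac{1}{2}|V(G)|$; hence the statement ``for each such $v$ and each $a \in N_G(v)$, there exists $b\notin N_G(v)\cup N_G(a)$'' holds trivially. Theorem \ref{theorem:3rdcharacterization} then concludes that $G$ is a $2$-distance graph.

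As a sanity check, and to give a proof that does not rely on vacuous quantification, one can instead go through Theorem \ref{theorem:1stcharacterization}(ii) directly using Lemma \ref{lemma:degree}. If $G$ has no edges the conclusion is immediate, so pick any $v_1v_2 \in E(G)$. Then $\text{deg}_G(v_1) + \text{deg}_G(v_2) \le 2\Delta(G) < |V(G)|$, and Lemma \ref{lemma:degree} supplies a vertex $v_3 \in V(G)\setminus(N_G(v_1)\cup N_G(v_2))$, which is exactly condition (ii) of Theorem \ref{theorem:1stcharacterization}. Either route delivers the corollary in a single line, so there is no genuine obstacle; the only point worth noting is that the proof really is this short because the hypothesis $\Delta(G) < \frac{1}{2}|V(G)|$ is designed to trigger the sharp inequality in Lemma \ref{lemma:degree} for every edge simultaneously.
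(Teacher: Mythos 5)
Your proof is correct and matches the paper's intent: the corollary is stated as an immediate consequence of Theorem \ref{theorem:3rdcharacterization} (the hypothesis $\Delta(G)<\frac{1}{2}|V(G)|$ makes its condition hold vacuously, which is exactly Case 2 of that theorem's proof in disguise). Your alternative route via Theorem \ref{theorem:1stcharacterization}(ii) and Lemma \ref{lemma:degree} is the same underlying argument, so no further comment is needed.
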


\begin{corollary}
Let $G$ be a graph with at least one edge. Then $G$ is a $2$-distance graph if and only if ${\rm diam}(G^c)=2$.
\end{corollary}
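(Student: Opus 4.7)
The plan is to derive this corollary directly from Theorem \ref{theorem:1stcharacterization}, which already tells us that $G$ is a $2$-distance graph if and only if $\operatorname{diam}(G^c)\le 2$. The only thing the corollary adds is upgrading the inequality to an equality under the hypothesis that $G$ has at least one edge, so the work reduces to ruling out the degenerate cases $\operatorname{diam}(G^c)=0$ and $\operatorname{diam}(G^c)=1$.

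For the forward direction, I would start by assuming $G$ is a $2$-distance graph. By Theorem \ref{theorem:1stcharacterization} we immediately get $\operatorname{diam}(G^c)\le 2$. Since $G$ has at least one edge $uv$, the pair $uv$ is a non-edge of $G^c$, so $G^c$ fails to be complete; in particular $\operatorname{diam}(G^c)\ne 0$ and $\operatorname{diam}(G^c)\ne 1$. Combining these observations forces $\operatorname{diam}(G^c)=2$.

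For the reverse direction, the assumption $\operatorname{diam}(G^c)=2$ obviously yields $\operatorname{diam}(G^c)\le 2$, so another appeal to Theorem \ref{theorem:1stcharacterization} concludes that $G$ is a $2$-distance graph.

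There is really no major obstacle here: the entire content is the elementary observation that $\operatorname{diam}(G^c)\in\{0,1\}$ precisely when $G^c$ is complete, i.e.\ when $G$ is edgeless. The ``at least one edge'' hypothesis is exactly what is needed to exclude those two values, and the equivalence then collapses into Theorem \ref{theorem:1stcharacterization}.
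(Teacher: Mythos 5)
Your proof is correct and follows essentially the same route the paper intends: it reduces the corollary to Theorem \ref{theorem:1stcharacterization} (the equivalence with $\mathrm{diam}(G^c)\le 2$) and uses the edge hypothesis to note $G^c$ is not complete, ruling out diameters $0$ and $1$. The paper leaves this as an immediate consequence, and your write-up supplies exactly that short argument.
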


\section{Solutions to the Equation $\boldsymbol{T_2(X)=kP_2,\ k \ge 2}$}

A quick application of Theorem \ref{theorem:1stcharacterization} shows that the path $P_2$ is not a $2$-distance graph. However, the union of at least two paths $P_2$ is a $2$-distance graph. In this section, we find all graphs that satisfy the equation $T_2(X)=kP_2$ for $k \ge 2$.

Let $K_{2n}$ be the complete graph of order $2n$, $n \ge 2$, with $$V(K_{2n})=\{u_1,u_2,\ldots,u_n,v_1,v_2,\ldots,v_n\}.$$
A \textit{cocktail party graph} of order $2n$, denoted by $CP_{2n}$, is the graph with $$V(CP_{2n})=V(K_{2n}) \quad\mbox{and}\quad E(CP_{2n})=E(K_{2n}) \setminus \{u_iv_i:i=1,2,\ldots,n\}.$$

It is not difficult to check that $T_2(P_4)=2P_2$, where $P_4$ is the path of order $4$, and $T_2(CP_{2n})=nP_2$. In general, if $X=pP_4 \cup qCP_{2n}$, where $p \ge 0$, $q \ge 0$, and $n \ge 2$ are integers, then $T_2(X)=(2p+qn)P_2$.

\begin{theorem}
Let $X$ be a connected graph such that $T_2(X)=kP_2$ for some integer $k \ge 2$. Then
$$X=\left\{
\begin{array}{ll}
P_4 \mbox{ or } CP_4 & \mbox{if $k=2$} \\
CP_{2k} & \mbox{if $k \ge 3$.}
\end{array} \right.$$
\end{theorem}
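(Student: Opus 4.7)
The plan is to use Corollary~\ref{corollary:complement} to reduce to spanning subgraphs of the cocktail-party graph, and then split on the maximum degree. Since $T_2(X)=kP_2$ has $2k$ vertices, $|V(X)|=2k$, and Corollary~\ref{corollary:complement} gives $E(X)\subseteq E((kP_2)^c)=E(CP_{2k})$; I therefore label $V(X)=\{u_1,v_1,\ldots,u_k,v_k\}$ so that the $T_2$-matching consists of the pairs $(u_i,v_i)$, and write $\pi(u_i)=v_i$, $\pi(v_i)=u_i$ for the $T_2$-partner involution. The key structural observation is that for each $v\in V(X)$, any two non-partner vertices of $N_X(v)$ must be adjacent in $X$: otherwise $v$ would be a common $X$-neighbor of them, placing them at distance $2$ and producing an edge of $T_2(X)$ not present in $kP_2$. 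Equivalently, $X[N_X(v)]$ is $K_{|N_X(v)|}$ minus some matching pairs.

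In the first case $\Delta(X)=2k-2$, let $u$ attain this maximum, so $N_X(u)=V(X)\setminus\{u,\pi(u)\}$ and $X[N_X(u)]\cong CP_{2(k-1)}$. If $\deg_X(\pi(u))=2k-2$ as well, then $X=CP_{2k}$. Otherwise, some $w\in N_X(u)$ is not adjacent to $\pi(u)$; since these are non-partners, $N_X(w)\cap N_X(\pi(u))=\emptyset$, and combined with the structure of $N_X(u)$ this pins $N_X(\pi(u))=\{\pi(w)\}$, so $\deg_X(\pi(u))=1$ and $\deg_X(\pi(w))=2k-2$. Re-applying the same dichotomy to $\pi(w)$ and its partner $w$ forces $\deg_X(w)\in\{1,2k-2\}$, while a direct count gives $\deg_X(w)=2k-3$. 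For $k=2$ the equality $2k-3=1$ holds, and the resulting configuration is exactly $X=P_4$; for $k\geq 3$ it fails, so this subcase is impossible and $X=CP_{2k}$.

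For the complementary case $\Delta(X)\leq 2k-3$, I will show no connected $X$ exists. Let $u_1$ attain the maximum degree and pick any non-partner non-neighbor, relabeled as $u_2$. Set $A=N_X(u_1)$ and $B=V(X)\setminus(\{u_1,v_1\}\cup A)$. The no-common-neighbor property for non-partner non-adjacencies at $u_1$ forces $N_X(b)\cap A=\emptyset$ for every non-partner $b\in B$, and since $A$-to-$B$ matching edges are already absent from $CP_{2k}$, no $X$-edge joins $A$ to $B$. Connectivity therefore requires $v_1$ to be adjacent in $X$ to vertices of both $A$ and $B$; but $N_X(v_1)$ is almost complete while the $A$-$B$ gap is total, so every $a\in A\cap N_X(v_1)$ and $b\in B\cap N_X(v_1)$ must form a matching pair, giving $|A\cap N_X(v_1)|=|B\cap N_X(v_1)|=1$. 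Tracing the $2k-4$ non-partner non-neighbors of $v_1$ then shows they all lie in $B$, forcing $|A|=1$ and hence $\Delta(X)=1$. This contradicts $X$ being connected on $2k\geq 4$ vertices, completing the case.

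The main obstacle is the second case, where the interplay between the almost-clique structure of $N_X(v_1)$, the partition $A\cup B$, and the no-common-neighbor constraint for non-partner non-neighbors demands careful bookkeeping of partners versus non-partners. Once these two cases are combined, the theorem follows immediately: for $k\geq 3$ only $CP_{2k}$ remains, and for $k=2$ both $P_4$ and $CP_4$ are captured in the first case.
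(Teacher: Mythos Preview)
Your argument is correct and takes a genuinely different route from the paper's proof. The paper handles $k=2$ by inspection and, for $k\ge 3$, argues directly that any two non-partner vertices must be adjacent: assuming $u_1u_2\notin E(X)$, it forces $\text{dist}_X(u_1,u_2)=3$, identifies the intermediate vertices as $v_2$ and $v_1$ to obtain an induced $P_4$ on $\{u_1,v_2,v_1,u_2\}$, and then uses connectivity together with $|V(X)|\ge 6$ to find a further vertex $w$ that creates a forbidden distance-$2$ pair. Your proof instead first embeds $X$ into $CP_{2k}$ via Corollary~\ref{corollary:complement}, extracts the ``neighborhoods are complete minus a matching'' observation, and then runs a clean dichotomy on $\Delta(X)$. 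What your approach buys is uniformity: the same machinery simultaneously produces $CP_{2k}$, isolates $P_4$ exactly when $2k-3=1$, and disposes of the low-degree case by a connectivity/degree contradiction, so the special treatment of $k=2$ disappears. The paper's approach is shorter in spirit but leaves the $k=2$ case and the final ``some $w$ adjacent to one but not all'' step to the reader; your argument is longer but each step is fully accounted for. One minor comment: in your Case~2, the qualifier ``non-partner $b\in B$'' and the remark about $A$--$B$ matching edges are redundant, since every $b\in B$ is automatically a non-partner of $u_1$ and the no-common-neighbor conclusion $N_X(b)\cap A=\emptyset$ already rules out all $A$--$B$ edges at once.
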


\begin{proof}
As observed in the previous paragraph, we know that $T_2(P_4)=2P_2$ and $T_2(CP_{2k})=kP_2$ for any $k \ge 2$.

For $k=2$, it is not difficult to determine that the only possible solutions are $P_4$ and $CP_4$.

Suppose $k \ge 3$, and let $V(kP_2)=\{u_1,u_2,\ldots,u_k,v_1,v_2,\ldots,v_k\}$ and $E(kP_2)=\{u_iv_i:i=1,2,\ldots,k\}$. We prove that $X=CP_{2k}$ is the only connected graph that satisfies the property.

For convenience, we only show here that $u_1$ must be adjacent to $u_2$ in $X$, and argue similarly that $u_1$ must also be adjacent to all other vertices (except $v_1$).

Suppose, on the contrary, that $u_1u_2 \not\in E(X)$. Then $\text{dist}_X(u_1,u_2) \ge 3$. Because $u_1$ is the only vertex in $X$ that is of distance $2$ from $v_1$, we have $\text{dist}_X(u_1,u_2)=3$. That is, for some vertex $v$, there is an induced path of the form $u_1vv_1u_2$. Since $v_2$ is the only vertex that is of distance $2$ from $u_2$, we must have $v=v_2$. Because $X$ is connected and $|V(X)| \ge 6$, we can find another vertex $w$ that is adjacent to one but not all of the vertices $u_1$, $v_2$, $v_1$, and $u_2$. However, in such scenario, one of $u_1$, $v_2$, $v_1$, and $u_2$ will be of distance $2$ from $w$, which is a contradiction. Thus, $u_1$ is adjacent to $u_2$ in $X$.

In general, it can be shown in a similar fashion that every vertex $u_i$ (respectively, $v_i$) is adjacent to all other vertices in $X$ except $v_i$ (respectively, $u_i$), which implies that $X=CP_{2k}$.
\end{proof}

The following corollary, whose proof follows from the preceding theorem by separately considering each component of a disconnected graph as a connected graph, enumerates all solutions to the equation $T_2(X)=kP_2$ for $k \ge 2$.

\begin{corollary}
Let $k$ be an integer greater than or equal to $2$. Then the solutions to the equation $T_2(X)=kP_2$ are $X=pP_4 \cup qCP_{2n}$ for all integers $p \ge 0$, $q \ge 0$, and $n \ge 2$ that satisfy the Diophantine equation $2p+qn=k$.
\end{corollary}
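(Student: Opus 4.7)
The plan is to reduce to the connected case, which is already handled by the preceding theorem, by decomposing $X$ into its connected components and analyzing each one separately.

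First, I would exploit the fact that edges of $T_2(X)$ never connect vertices in different components of $X$, since vertices in different components are at distance $\infty$. Thus if $X=X_1 \cup \cdots \cup X_m$ is the decomposition into connected components, then $T_2(X)=T_2(X_1) \cup \cdots \cup T_2(X_m)$, and because $kP_2$ has no isolated vertices and consists only of disjoint edges, each $T_2(X_i)$ must itself equal $k_iP_2$ for some $k_i$, with $\sum k_i = k$.

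Next, I would rule out the degenerate sizes of $k_i$. A component reduced to a single vertex, or to $P_2$, would produce isolated vertices in $T_2(X)$ and is therefore forbidden. The case $k_i=1$ is also impossible: a connected graph whose $2$-distance graph is $P_2$ would have only two vertices, at distance $2$, contradicting connectedness. Hence every $k_i \ge 2$, placing each $X_i$ squarely in the scope of the preceding theorem. That theorem then forces each $X_i$ to be either $P_4$ (with $k_i=2$) or $CP_{2k_i}$ (with $k_i \ge 2$, noting that $CP_4$ is included as the case $k_i=2$).

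Finally, writing $p$ for the number of $P_4$ components and collecting the cocktail-party components, the identity $\sum k_i = k$ becomes exactly the Diophantine equation $2p+qn=k$ (up to grouping cocktail-party components by their common size $n$). The converse direction is already contained in the observation preceding the theorem, namely $T_2(P_4)=2P_2$ and $T_2(CP_{2n})=nP_2$, which combine componentwise to give $T_2(pP_4 \cup qCP_{2n})=(2p+qn)P_2$. I do not anticipate a serious obstacle here; the only delicate points are tracking the no-isolated-vertex constraint, which forbids tiny components, and ruling out $k_i=1$, which together ensure the preceding theorem applies to every component.
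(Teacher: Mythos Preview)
Your approach is correct and essentially identical to the paper's, which simply asserts that the corollary follows from the preceding theorem by considering each connected component of $X$ separately. Your write-up supplies the details the paper omits (ruling out isolated vertices, edge components, and the case $k_i=1$), and your parenthetical about grouping cocktail-party components by a common size $n$ correctly flags the only real imprecision, which lies in the statement rather than the argument.
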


\section{Solutions to the Equation $\boldsymbol{T_2(G)=K_m\cup K_n}$}
We know from Theorem \ref{theorem:1stcharacterization} that $K_m \cup K_n$ is a $2$-distance graph for any positive integers $m$ and $n$. In this section, we solve the equation $T_2(X)=K_m \cup K_n$.

A graph is said to be \textit{bipartite} if its vertex set can be partitioned into two subsets $A$ and $B$ such that each edge has one endvertex in $A$ and the other in $B$. If every vertex in $A$ is adjacent to every vertex in $B$, then the graph is a \textit{complete bipartite graph}, which is denoted by $K_{m,n}$, where $|A|=m$ and $|B|=n$.

We observe that being bipartite is a hereditary property; that is, if $G$ is bipartite, then all spanning subgraphs of $G$ are also bipartite, and the sizes of the partitions are also inherited.

The following lemma can be shown easily.

\begin{lemma}\label{lemma:complete1}
If $X=K_{1,1}$ or $X=K_{1,1}^c$, then $T_2(X)=K_1 \cup K_1$.
\end{lemma}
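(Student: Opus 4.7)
The plan is to verify both cases by direct computation of distances, since each graph has only two vertices and the outcome is determined by whether those two vertices are at distance exactly $2$.

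First I would set up notation: let $V(X) = \{u, v\}$ in each case. Recall the convention from the introduction that $\text{dist}_X(u,v) = 1$ iff $uv \in E(X)$, and $\text{dist}_X(u,v) = \infty$ iff $u$ and $v$ lie in distinct components. Since $T_2(X)$ has the same vertex set as $X$, in both cases $V(T_2(X)) = \{u,v\}$, which is already the vertex set of $K_1 \cup K_1$. The only thing left to check is that $uv \notin E(T_2(X))$, i.e.\ $\text{dist}_X(u,v) \ne 2$.

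For $X = K_{1,1}$, the two vertices $u$ and $v$ are joined by an edge, so $\text{dist}_X(u,v) = 1 \ne 2$. For $X = K_{1,1}^c$, the vertices $u$ and $v$ form two isolated components, so $\text{dist}_X(u,v) = \infty \ne 2$. In either case, $u$ and $v$ are non-adjacent in $T_2(X)$, giving $T_2(X) = K_1 \cup K_1$.

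There is no real obstacle here; the statement is essentially a base-case observation intended for use in the subsequent enumeration of solutions to $T_2(X) = K_m \cup K_n$, and the proof amounts to reading off the two possible distances between two vertices.
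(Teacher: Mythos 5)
Your proof is correct and matches what the paper intends: the paper states this lemma without proof (``can be shown easily''), and your direct distance check -- distance $1$ for $K_{1,1}$ and $\infty$ for $K_{1,1}^c$, neither equal to $2$ -- is exactly the routine verification being relied upon.
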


\begin{lemma}\label{lemma:complete2}
Let $X$ be a bipartite graph with partitions of sizes $m$ and $n$. If $2 \le \text{\rm diam}(X) \le 3$, then $T_2(X)=K_m \cup K_n$.
\end{lemma}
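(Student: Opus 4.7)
The plan is to exploit the parity constraint on distances in a bipartite graph. Let $A$ and $B$ denote the bipartition of $X$ with $|A|=m$ and $|B|=n$. The central fact is that every walk in a bipartite graph between two vertices of the same part has even length, while every walk between vertices of different parts has odd length; hence $\mathrm{dist}_X(u,v)$ inherits this parity. A preliminary remark is that the hypothesis $\mathrm{diam}(X)\le 3$ forces $X$ to be connected, since otherwise at least one distance would be $\infty$ and the diameter would exceed $3$.

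With that in hand, I would split the analysis into two cases according to whether the two vertices lie in the same part. For distinct $u,v$ in the same part (say both in $A$), there is no edge between them in $X$, so $\mathrm{dist}_X(u,v)\ge 2$. Combined with the even-parity restriction and the upper bound $\mathrm{dist}_X(u,v)\le 3$, the only possibility is $\mathrm{dist}_X(u,v)=2$. For $u\in A$ and $v\in B$, the distance must be odd, hence equal to $1$ or $3$, never $2$.

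Assembling the two cases, $T_2(X)$ contains an edge between every pair of distinct vertices inside $A$, an edge between every pair of distinct vertices inside $B$, and no edge crossing the bipartition, which is precisely $K_m\cup K_n$. The argument is essentially a parity-plus-diameter check, so no deep obstacle is expected; the only delicate point is ensuring that the diameter bound genuinely forces connectedness (thereby ruling out the pathological case of two same-part vertices sitting in different components with infinite distance). The lower bound $\mathrm{diam}(X)\ge 2$ is needed to exclude the degenerate bipartite graph $K_{1,1}$, which has diameter $1$ and is already handled by the preceding Lemma~\ref{lemma:complete1}.
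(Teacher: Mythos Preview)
Your proof is correct and follows essentially the same parity-of-distances argument as the paper: same-part vertices have even distance, different-part vertices have odd distance, and the diameter bound pins the former at exactly $2$. The only cosmetic difference is that the paper first disposes of the case $\mathrm{diam}(X)=2$ separately (noting $X=K_{m,n}$) before running the parity argument for $\mathrm{diam}(X)=3$, whereas your uniform treatment handles both at once.
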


\begin{proof}
When $\text{diam}(X)=2$, it is not difficult to see that $X=K_{m,n}$, which immediately implies that $T_2(X)=K_m \cup K_n$.

Suppose that $\text{diam}(X)=3$, and let the partitions of $V(X)$ be $A$ and $B$ with $|A|=m>1$ and $|B|=n>1$. Let $x$ and $y$ be distinct vertices of $X$, and we consider two cases.

{\sc Case 1.} If $x,y \in A$ or $x,y \in B$, then $\text{dist}_X(x,y)$ must be of even parity. Since $\text{diam}(X)=3$, we must have $\text{dist}_X(x,y)=2$, and so $xy \in T_2(X)$.

{\sc Case 2.} If $x \in A$ and $y \in B$ (or $y \in A$ and $x \in B$), then $\text{dist}_X(x,y)$ must be of odd parity, and so $xy \not\in T_2(X)$.

Therefore, it follows that if $\text{diam}(X)=3$, then $T_2(X)=K_m \cup K_n$.
\end{proof}

\begin{lemma}\label{lemma:complete3}
Let $m$ and $n$ be positive integers with at least one of them greater than $1$. If $T_2(X)=K_m \cup K_n$, then $X$ is a nonempty spanning subgraph of $K_{m,n}$ (that is, $X$ is a nonempty bipartite graph with partitions of sizes $m$ and $n$).
\end{lemma}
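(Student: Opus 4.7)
The plan is to appeal directly to Corollary \ref{corollary:complement}, which already says that any solution $X$ to $T_2(X) = G$ must satisfy $E(X) \subseteq E(G^c)$. So the main task is to identify the complement of $K_m \cup K_n$ and then establish nonemptiness separately.

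First I would set $G = K_m \cup K_n$ and write its vertex set as $A \cup B$, where $A = V(K_m)$ and $B = V(K_n)$, so that the edges of $G$ are exactly all pairs within $A$ together with all pairs within $B$. Taking the complement on the full vertex set $A \cup B$ removes every such intra-part pair and adds every cross pair, yielding $G^c = K_{m,n}$ with bipartition $(A,B)$. Since $V(X) = V(T_2(X)) = A \cup B$ by definition of the $2$-distance operator, Corollary \ref{corollary:complement} immediately gives $E(X) \subseteq E(K_{m,n})$, so $X$ is a spanning subgraph of $K_{m,n}$, which is the bipartite structure with parts of sizes $m$ and $n$ that the lemma requires.

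It remains to verify that $X$ is nonempty. Here I would use the hypothesis that $\max(m,n) \geq 2$: without loss of generality $m \geq 2$, so there exist distinct $u,v \in A$ with $uv \in E(K_m \cup K_n) = E(T_2(X))$. By definition of $T_2$, this means $\text{dist}_X(u,v) = 2$, which forces $X$ to contain a path of length two from $u$ to $v$ and in particular to have at least one edge.

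I do not expect any serious obstacle: once Corollary \ref{corollary:complement} is invoked and $(K_m \cup K_n)^c$ is correctly identified as $K_{m,n}$, the spanning-subgraph conclusion is automatic, and nonemptiness follows from the existence of a single edge in $T_2(X)$. The only mild subtlety worth stating explicitly in the write-up is that the bipartition of $X$ inherited from $K_{m,n}$ has the prescribed part sizes $m$ and $n$, which is exactly the parenthetical claim in the lemma.
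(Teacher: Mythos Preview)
Your proposal is correct. The route differs from the paper's only in packaging: you invoke Corollary~\ref{corollary:complement} to get $E(X)\subseteq E((K_m\cup K_n)^c)=E(K_{m,n})$ in one stroke, whereas the paper argues directly from the definition of $T_2$ that vertices adjacent in $T_2(X)=K_m\cup K_n$ are at distance $2$ in $X$ and hence non-adjacent there, which forces the bipartition. Both arguments rest on the same observation (an edge in $T_2(X)$ is a non-edge in $X$); yours reuses the already-proved corollary and is a little more economical, while the paper's is self-contained. Your nonemptiness step is the same as the paper's, just spelled out more fully.
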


\begin{proof}
By definition, the vertices of $K_m$ are not adjacent to each other in $X$. The same is true for the vertices of $K_n$. The vertices of $K_m$ and $K_n$ give rise to a partitioning of $V(X)$ into two sets of sizes $m$ and $n$, and any edge joins one vertex in one set and another vertex in the other set. Since one of $m$ and $n$ is greater than $1$, $K_m \cup K_n$ is nonempty, and so $X$ is also nonempty. Thus, $X$ is a nonempty bipartite graph with partitions of sizes $m$ and $n$.
\end{proof}

\begin{theorem}
Let $m$ and $n$ be positive integers. Then
\begin{enumerate}
\item[{\rm (i)}] $T_2(X)=K_1 \cup K_1$ if and only if $X=K_{1,1}$ or $X=K_{1,1}^c$; and
\item[{\rm (ii)}] $T_2(X)=K_m \cup K_n$, where $m>1$ or $n>1$, if and only if $X$ is a spanning subgraph of $K_{m,n}$ such that $2 \le \text{\rm diam}(X) \le 3$.
\end{enumerate}
\end{theorem}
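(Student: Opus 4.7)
The plan is to handle the two parts separately. For part (i), a direct enumeration suffices: if $T_2(X)=K_1\cup K_1$ then $|V(X)|=2$, and the only graphs on two vertices are $K_{1,1}$ and $K_{1,1}^c$, both of which are solutions by Lemma \ref{lemma:complete1}. For part (ii), the backward direction is immediate from Lemma \ref{lemma:complete2}, since any spanning subgraph of $K_{m,n}$ is a bipartite graph with partitions of sizes $m$ and $n$.

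For the forward direction of (ii), I would first apply Lemma \ref{lemma:complete3} to conclude that $X$ is a nonempty bipartite spanning subgraph of $K_{m,n}$ with partitions $A$ and $B$ of sizes $m$ and $n$; it then remains to bound $\text{diam}(X)$. The lower bound $\text{diam}(X)\ge 2$ is immediate, since $T_2(X)$ has at least one edge (because $m>1$ or $n>1$), so some pair of vertices of $X$ is at distance exactly $2$.

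The substantive step is the upper bound $\text{diam}(X)\le 3$. Any two vertices lying in the same part are at distance $2$ in $X$, as they are adjacent in $T_2(X)$. For $x\in A$ and $y\in B$, bipartiteness forces $\text{dist}_X(x,y)$ to be odd, and the goal is to produce a walk of length $3$ from $x$ to $y$ whenever $xy\notin E(X)$. Assuming without loss of generality that $n\ge 2$, I would pick $y'\in B\setminus\{y\}$, use $\text{dist}_X(y,y')=2$ to obtain $a\in A$ adjacent to both, then use $\text{dist}_X(x,a)=2$ (valid because $a\ne x$, else $xy\in E(X)$) to obtain $b\in B$ adjacent to both $x$ and $a$, with $b\ne y$ by the same reasoning. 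The vertices $x,b,a,y$ then form a path of length $3$ in $X$. The boundary case $n=1$, $m\ge 2$ is handled separately: all edges of $X$ are incident to the unique vertex of $B$, and requiring every pair in $A$ to be at distance $2$ forces $X=K_{m,1}$, of diameter $2$.

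The main obstacle is the path-construction step, where care is required to verify that the four candidate vertices $x,b,a,y$ are genuinely distinct so that the walk produced is actually a path. Once this is done, all cases combine to yield $2\le\text{diam}(X)\le 3$, completing the proof.
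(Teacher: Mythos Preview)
Your proof is correct. The chief difference from the paper lies in how the upper bound $\text{diam}(X)\le 3$ is obtained. The paper argues by contradiction: assuming $\text{diam}(X)\ge 4$, it selects vertices $x,y$ with $\text{dist}_X(x,y)=4$; bipartiteness then places $x$ and $y$ in the same part, so they are adjacent in $T_2(X)=K_m\cup K_n$ and hence at distance $2$ in $X$, a contradiction. You instead work directly, first observing that any two vertices in the same part are at distance $2$, and then, for $x\in A$, $y\in B$ with $xy\notin E(X)$, explicitly constructing a length-$3$ path $x\,b\,a\,y$. The paper's route is shorter but tacitly assumes that a pair at distance exactly $4$ exists, which needs $X$ to be connected---something not yet established at that point of the argument; your construction handles connectedness and the diameter bound simultaneously, at the price of a little more bookkeeping. (Your separate treatment of the case $n=1$, $m\ge 2$ is redundant once you invoke symmetry to assume $n\ge 2$, but it does no harm.)
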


\begin{proof} The proof of (i) is straightforward with Lemma \ref{lemma:complete1}. We now prove (ii).

If $X$ is a spanning subgraph of $K_{m,n}$ such that $2 \le \text{diam}(X) \le 3$, then, by Lemma \ref{lemma:complete2}, $T_2(X)=K_m \cup K_n$.

Suppose that $T_2(X)=K_m \cup K_n$, where $m>1$ or $n>1$. By Corollary \ref{corollary:complement} and Lemma \ref{lemma:complete3}, we know that $(K_m \cup K_n)^c=K_{m,n}$ is a solution to the given equation, and $X$ is a spanning (bipartite) subgraph of $K_{m,n}$ with at least one edge. Since $\text{diam}(K_{m,n})=2$, it follows that $\text{diam}(X) \ge 2$.

Suppose $\text{diam}(X) \ge 4$. Then there exist two vertices $x$ and $y$ such that $\text{dist}_X(x,y)=4$. This implies that $x$ and $y$ belong to one partition of $V(X)$, but they are not of distance $2$ in $X$, which contradicts the fact that they are adjacent in $T_2(X)$ since $T_2(X)=K_m \cup K_n$.

Therefore, we have $\text{diam}(X) \le 3$, which completes the proof of (ii).
\end{proof}

The following corollaries are quick consequences of the preceding theorem.

\begin{corollary}
Let $X_0$ be a connected bipartite graph with partitions $A$ and $B$ of sizes $m$ and $n$, respectively. If there exist a vertex in $A$ of degree $n$ and a vertex in $B$ of degree $m$, then $T_2(X_0)=K_m \cup K_n$. Moreover, any solution to the equation $T_2(X)=K_m\cup K_n$ is a subgraph of a graph having the same property as that of $X_0$.
\end{corollary}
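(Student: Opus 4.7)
The plan is to reduce both parts of the corollary to statement (ii) of the preceding theorem, which characterizes the solutions of $T_2(X)=K_m\cup K_n$ (for $m>1$ or $n>1$) as the spanning subgraphs of $K_{m,n}$ whose diameter equals $2$ or $3$. The degenerate case $m=n=1$ can be dispatched separately by appealing to Lemma \ref{lemma:complete1}.

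For the first assertion, let $a\in A$ and $b\in B$ be the hypothesized vertices of degrees $n$ and $m$, so that $a$ is adjacent in $X_0$ to every vertex of $B$ and $b$ to every vertex of $A$. Since $X_0$ is bipartite with parts of sizes $m$ and $n$, it is already a spanning subgraph of $K_{m,n}$, and by the preceding theorem it is enough to verify $2\le \text{diam}(X_0)\le 3$. The lower bound is immediate: under the hypothesis $m>1$ or $n>1$, one of the parts contains two distinct nonadjacent vertices. For the upper bound I would route any two vertices $x,y\in V(X_0)$ through $\{a,b\}$: two vertices of $A$ are joined through $b$ in two steps; two vertices of $B$ are joined through $a$ in two steps; and a vertex of $A$ and a vertex of $B$ are joined by the walk $x,b,a,y$, whose three edges are all forced by the universal-adjacency hypothesis.

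For the second assertion, suppose $X$ is any solution with $m>1$ or $n>1$. The preceding theorem gives that $X$ is a spanning subgraph of $K_{m,n}$; then $X_0:=K_{m,n}$ itself is a connected bipartite graph with parts of sizes $m$ and $n$ in which every vertex of $A$ has degree $n$ and every vertex of $B$ has degree $m$, so $X$ embeds into a graph of the required form. When $m=n=1$, Lemma \ref{lemma:complete1} names the solutions as $K_{1,1}$ and $K_{1,1}^c$, both of which are subgraphs of the valid $X_0=K_{1,1}$.

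I do not foresee any real obstacle, since almost all of the work has already been done in the preceding theorem and this corollary is essentially a reformulation of it. The only care required is in the upper bound $\text{diam}(X_0)\le 3$ for the forward direction, where one must check that the hypothesized universal vertices $a$ and $b$ really do yield a walk of length at most $3$ between every pair of vertices (including the mixed-part case), and in remembering to separate out the $m=n=1$ case before invoking part (ii) of the theorem.
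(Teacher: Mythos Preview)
Your proposal is correct and follows exactly the route the paper intends: the paper gives no explicit proof, stating only that the corollary is a ``quick consequence of the preceding theorem,'' and your argument does precisely this by verifying the diameter bound $2\le\text{diam}(X_0)\le 3$ via the universal vertices $a,b$ and then invoking part~(ii) (with the $m=n=1$ case handled through Lemma~\ref{lemma:complete1}).
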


\begin{corollary}\label{corollary:star}
Let $n \ge 2$ be a positive integer. Then $T_2(X)=K_1 \cup K_n$ if and only if $X=(K_1 \cup K_n)^c$.
\end{corollary}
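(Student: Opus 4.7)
The plan is to invoke part (ii) of the preceding theorem with $m = 1$, reducing the problem to identifying which spanning subgraphs of $K_{1,n}$ have diameter in $\{2,3\}$. First I would observe that $(K_1 \cup K_n)^c$ is precisely $K_{1,n}$: the complement adds edges from the formerly isolated vertex to every vertex of $K_n$ and deletes the edges inside $K_n$. So the ``if'' direction reduces to checking $T_2(K_{1,n}) = K_1 \cup K_n$, which follows from Lemma \ref{lemma:complete2} since $\text{diam}(K_{1,n}) = 2$.

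For the ``only if'' direction, I would assume $T_2(X) = K_1 \cup K_n$ and apply part (ii) of the theorem (valid since $n \ge 2$) to conclude that $X$ is a spanning subgraph of $K_{1,n}$ with $2 \le \text{diam}(X) \le 3$. Let $u$ denote the unique vertex in the part of size $1$ and $v_1, \ldots, v_n$ the vertices in the part of size $n$. Since $X$ is bipartite with these parts, the only possible edges are of the form $uv_i$, so $X$ is completely determined by the set $S \subseteq \{v_1,\ldots,v_n\}$ of neighbors of $u$.

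The key step is then to show that $S$ must be all of $\{v_1, \ldots, v_n\}$. If $S$ were a proper subset, any $v_j \notin S$ would be isolated in $X$ (it has no neighbor among the $v_i$'s because $X$ is bipartite with $u$ on the other side, and no neighbor $u$ by assumption), giving $\text{diam}(X) = \infty$ and contradicting part (ii). Hence $S = \{v_1, \ldots, v_n\}$ and $X = K_{1,n} = (K_1 \cup K_n)^c$.

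I do not expect any serious obstacle: once the theorem is applied, the structural constraint on spanning subgraphs of a star is very rigid, and the connectedness argument is essentially immediate. The only thing to be careful about is ensuring the diameter argument handles the case $n = 2$ correctly (where $K_{1,2} = P_3$ still has diameter $2$), but that case behaves no differently from the general one.
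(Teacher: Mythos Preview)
Your proposal is correct and matches the paper's intent: the paper states this corollary as a ``quick consequence of the preceding theorem'' without giving an explicit argument, and your invocation of part (ii) with $m=1$ followed by the observation that the only spanning subgraph of the star $K_{1,n}$ with finite diameter is $K_{1,n}$ itself is exactly the reasoning being alluded to.
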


\end{document}